\newtheorem{lemma}{Lemma}[section]
\newtheorem{theorem}[lemma]{Theorem}
\newtheorem{proposition}[lemma]{Proposition}
\newtheorem{corollary}[lemma]{Corollary}
\renewenvironment{proof}[1][\proofname]{{\noindent\bf #1. }}{\qed}
\newtheorem{theoremletters}{Theorem}
\newtheorem{corollaryletters}[theoremletters]{Corollary}
\theoremstyle{definition}
\newtheorem{remark}[lemma]{Remark}
\newtheorem{example}[lemma]{Example}
{\bf}{\rm}
\newcommand{\abs}[1]{\ensuremath{|#1|}}
\newcommand{\op}{\operatorname}
\newcommand{\ce}[2]{\pmb{\op{C}}_{#1}(#2)}
\newcommand{\ze}[1]{\pmb{\op{Z}}(#1)}
\newcommand{\hyp}[1]{\pmb{\op{Z}}_{\infty}(#1)}
\newcommand{\rad}[2]{\pmb{\op{O}}_{#1}(#2)}
\newcommand{\syl}[2]{\op{Syl}_{#1}\left(#2\right)}
\newcommand{\hall}[2]{\op{Hall}_{#1}\left(#2\right)}
\begin{document}

\title{\bf Determining hypercentral Hall subgroups in finite groups}

\author{\sc V. Sotomayor\thanks{Instituto Universitario de Matemática Pura y Aplicada, Universitat Politècnica de València, Camino de Vera s/n, 46022 Valencia, Spain. \newline
\Letter: \texttt{vsotomayor@mat.upv.es} \newline 
ORCID: 0000-0001-8649-5742 \newline \rule{6cm}{0.1mm}\newline
This research is supported by Ayuda a Primeros Proyectos de Investigación (PAID-06-23), Vicerrectorado de Investigación de la Universitat Politècnica de València (UPV), and by Proyecto CIAICO/2021/163 from Generalitat Valenciana (Spain). \newline
}}

\date{\textit{\small Dedicated to the memory of Francesco de Giovanni and Avinoam Mann}}

\maketitle

\begin{abstract}
\noindent Let $G$ be a finite group, and let $\pi$ be a set of primes. The aim of this paper is to obtain some results concerning how much information about the $\pi$-structure of $G$ can be gathered from the knowledge of the sizes of conjugacy classes of its $\pi$-elements and of their multiplicities. Among other results, we prove that this multiset of class sizes determines whether $G$ has a hypercentral Hall $\pi$-subgroup.

\medskip

\noindent \textbf{Keywords} Finite groups $\cdot$ Conjugacy classes $\cdot$ Hypercentral subgroups $\cdot$ Hall subgroups

\smallskip

\noindent \textbf{2020 MSC} 20D15 $\cdot$ 20D20 $\cdot$ 20E45 
\end{abstract}


\section{Introduction}

Within finite group theory, there are numerous results which endorse that the set of sizes of conjugacy classes of a group provide relevant information on its structure. However, there are some features that may not be inferred from this piece of information. For instance, the order of the group is unknown; less trivially, solubility cannot be determined as G. Navarro showed in \cite{N}, and the same happens with nilpotency as A.R. Camina and R.D. Camina established in \cite{CC}. In contrast, if the frequencies of the class sizes are also considered, then it is straightforward to compute the group order, and J. Cossey, T. Hawkes and A. Mann proved that nilpotency can be read off from that multiset (cf. \cite{CHM}). Nevertheless, whether it is possible to recognise solubility (or supersolubility) of a group from its class lenghts and their multiplicities is a problem that remains open nowadays, and it seems difficult to solve. It is worth mentioning that, as usual, the result due to Cossey, Hawkes and Mann has a counterpart in the context of irreducible character degrees, which was addressed by I.M. Isaacs in \cite{I}; in fact, it predates the conjugacy class version. We also refer the interested reader to \cite{Haw} and \cite{M} for further development concerning the multiset of character degrees.

To investigate whether the set of class sizes of some smaller subsets of elements of the group is enough for studying its algebraic structure is a research line that has received the interest of many authors over the last decades. Surprisingly enough, very little seems to have been done in the analogous context of either the multiset of class sizes or the multiset of character degrees. In this framework, A. Beltrán recently made some progress (cf. \cite{B}): he showed that the knowledge of the sizes of the conjugacy classes of a group $G$ that are contained in a normal subgroup $N$ and of their multiplicities determines if $N$ is hypercentral in $G$.

Inspired by the previous research, the aim of the present paper is to analyse whether some features of the $\pi$-structure of a group $G$ can be determined when only the class sizes of the $\pi$-elements of $G$ and their frequencies are considered, for a set of primes $\pi$. For this goal, we introduce, in a similar way to the papers cited above, the \emph{$\pi$-class size frequency function} of $G$, $w^G_{\pi}(n):\mathbb{N}\longrightarrow \mathbb{N}$, as follows: $$ w^G_{\pi}(n)=\frac{1}{n}\cdot|\{g\in G_{\pi}\; : \; |G:\ce{G}{g}|=n\}|,$$ where $G_{\pi}$ denotes the set of $\pi$-elements of $G$. In other words, for every natural number $n$, the function $w^G_{\pi}(n)$ computes the number of conjugacy classes of $\pi$-elements of $G$ that have size $n$. This function may provide some arithmetical properties of the $\pi$-structure of $G$, as the order of the Hall $\pi$-subgroup of its centre $\ze{G}$, i.e. the number of $\pi$-elements that are central in $G$; the $\pi$-part of the order of its hypercentre can also be computed (see Theorem \ref{theo-hyp}), and this is actually a key ingredient in our proofs. Nonetheless, other elementary facts, as the $\pi$-part of the order of $G$, cannot be deduced if $\pi$ is a proper subset of the set of prime divisors of the order of $G$: for example, the symmetric group on three letters and the alternating group on four letters, with $\pi=\{2\}$, have equal $\pi$-class size frequency function, although the $2$-parts of their orders differ.

This loss of information about class sizes of elements of $G$ with orders divisible by prime numbers lying in $\pi'$ entails the use of a different approach in some arguments. For instance, at some point a celebrated Frobenius' theorem regarding the number of solutions of $x^n=1$ in a group $G$ such that $n$ is a divisor of its order is involved.

Our first result shows that the $\pi$-class size frequency function of $G$ determines the existence of hypercentral Hall $\pi$-subgroups. We highlight that no $\pi$-separability assumption is required.

\begin{theoremletters}
\label{teoA}
Let $G$ and $H$ be finite groups, and suppose that $G$ has a hypercentral Hall $\pi$-subgroup for certain set of primes $\pi$. If the conjugacy class sizes of the $\pi$-elements of $G$ joint with their multiplicities are the same as those of $H$, then $H$ has a hypercentral Hall $\pi$-subgroup.
\end{theoremletters}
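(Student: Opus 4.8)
The plan is to reduce the conclusion to a single numerical invariant that can be read off the frequency function, and then transport it from $G$ to $H$. The key is to recast the existence of a hypercentral Hall $\pi$-subgroup as a statement about the number of $\pi$-elements. Concretely, I would first establish the criterion that \emph{$G$ possesses a hypercentral Hall $\pi$-subgroup if and only if $|G_{\pi}|=|(\hyp{G})_{\pi}|$}, where $(\hyp{G})_{\pi}$ denotes the (unique) Hall $\pi$-subgroup of the hypercentre. The reduction rests on the fact that $\hyp{G}$ is nilpotent, so $(\hyp{G})_{\pi}$ is characteristic in $\hyp{G}$ and hence normal in $G$, and moreover every $\pi$-subgroup of $G$ contained in $\hyp{G}$ lies inside $(\hyp{G})_{\pi}$. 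Thus a Hall $\pi$-subgroup contained in $\hyp{G}$ must \emph{equal} $(\hyp{G})_{\pi}$, so having a hypercentral Hall $\pi$-subgroup is the same as the assertion that $(\hyp{G})_{\pi}$ is itself a Hall $\pi$-subgroup of $G$.

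For the equivalence, the forward direction is immediate: if $(\hyp{G})_{\pi}$ is a normal Hall $\pi$-subgroup, then $G/(\hyp{G})_{\pi}$ is a $\pi'$-group, so every $\pi$-element of $G$ already lies in $(\hyp{G})_{\pi}$, whence $G_{\pi}=(\hyp{G})_{\pi}$ and the two orders coincide. For the reverse direction I would use that $(\hyp{G})_{\pi}\subseteq G_{\pi}$ always holds; equality of cardinalities then forces $G_{\pi}=(\hyp{G})_{\pi}$, so in particular the set $G_{\pi}$ is a subgroup. Writing each $x\in G$ as the product of its commuting $\pi$- and $\pi'$-parts shows that $G/G_{\pi}$ contains no nontrivial $\pi$-element, so $G_{\pi}$ is a normal Hall $\pi$-subgroup of $G$; being contained in $\hyp{G}$, it is hypercentral. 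I would emphasise that this argument requires no $\pi$-separability assumption, since the Hall subgroup is \emph{produced} by the order equality rather than assumed to exist.

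It then remains to see that both sides of the criterion are encoded in the frequency function. The number of $\pi$-elements is $|G_{\pi}|=\sum_{n} n\,w^{G}_{\pi}(n)$, because a class of $\pi$-elements of size $n$ contributes exactly $n$ elements and there are $w^{G}_{\pi}(n)$ such classes. On the other hand, $|(\hyp{G})_{\pi}|=|\hyp{G}|_{\pi}$ is determined by $w^{G}_{\pi}$ via Theorem~\ref{theo-hyp}. Consequently the nonnegative integer $|G_{\pi}|-|(\hyp{G})_{\pi}|$ depends only on the frequency function; since $w^{G}_{\pi}=w^{H}_{\pi}$, it takes the same value for $G$ and $H$. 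By hypothesis it vanishes for $G$, hence it vanishes for $H$, and the criterion yields a hypercentral Hall $\pi$-subgroup of $H$.

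The genuine difficulty of the whole scheme is concentrated in Theorem~\ref{theo-hyp}, namely in extracting $|\hyp{G}|_{\pi}$ from the multiset of $\pi$-class sizes (this is presumably where Frobenius' theorem on solutions of $x^{n}=1$ enters, to compensate for the loss of information about $|G|_{\pi}$). For Theorem~\ref{teoA} itself the only point that calls for a little care is the reverse implication of the criterion, where one must upgrade the merely set-theoretic equality $G_{\pi}=(\hyp{G})_{\pi}$ to the conclusion that this common set is a bona fide normal Hall $\pi$-subgroup lying in $\hyp{G}$.
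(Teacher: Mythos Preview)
Your argument is correct, and it follows a genuinely different route from the paper. The paper does not isolate your criterion $|G_{\pi}|=|(\hyp{G})_{\pi}|$; instead it argues directly on $H$: from $G=\rad{\pi}{G}\times\rad{\pi'}{G}$ it reads off $|G|_{\pi}=|G_{\pi}|=|H_{\pi}|$, then invokes Frobenius' theorem on $x^{|H|_{\pi}}=1$ to obtain $|H|_{\pi}\mid |H_{\pi}|=|G|_{\pi}$, and combines this with $|G|_{q}=|\hyp{H}|_{q}\mid |H|_{q}$ (from Theorem~\ref{theo-hyp}) to squeeze $|\hyp{H}|_{q}=|H|_{q}$ for every $q\in\pi$. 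So in the paper Frobenius is used in the proof of Theorem~\ref{teoA} itself, not in Theorem~\ref{theo-hyp} as you guessed; the latter is proved by a purely combinatorial induction on $|G|$ through a central subgroup of order $q$. Your approach is cleaner in that it dispenses with Frobenius altogether: once the criterion is in hand, both $|G_{\pi}|$ and $|\hyp{G}|_{\pi}$ are visibly invariants of $w^{G}_{\pi}$, and the transport is immediate. What the paper's detour buys is the additional information $|G|_{\pi}=|H|_{\pi}$, which your argument yields only a posteriori.
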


A particular case that merits to be emphasised is when $\pi=p'$, since it follows that the nilpotency of $G$ can be determined from its $p'$-class size frequency function. Remember that the main result of \cite{CHM} attained the same conclusion but considering all the class sizes of $G$, which is a larger multiset in general. Recall that a $p$\emph{-regular} element of $G$ is an element whose order is not divisible by $p$.

\begin{corollaryletters}
\label{corB}
Let $G$ and $H$ be finite groups, and suppose that $G$ is nilpotent. If, for a given prime $p$, the class sizes of the $p$-regular elements of $G$ and their multiplicities coincide with those of $H$, then $H$ is nilpotent.
\end{corollaryletters}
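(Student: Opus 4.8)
The plan is to deduce this corollary from Theorem~\ref{teoA} by specialising to the complementary set of primes $\pi=p'$. First I would observe that the $p$-regular elements of a group are exactly its $\pi$-elements for this choice of $\pi$, so the hypothesis that the class sizes of the $p$-regular elements of $G$ and their multiplicities coincide with those of $H$ is precisely the statement that $G$ and $H$ share the same $\pi$-class size frequency data. Next, since $G$ is nilpotent we have $\hyp{G}=G$, and therefore any Hall $\pi$-subgroup of $G$ (which exists, as a nilpotent group is the direct product of its Sylow subgroups) is automatically contained in $\hyp{G}$; in other words, $G$ possesses a hypercentral Hall $\pi$-subgroup. Theorem~\ref{teoA} then applies and produces a hypercentral Hall $\pi$-subgroup $K$ of $H$, so that $K\leq\hyp{H}$ while $\abs{H:K}$ is a power of $p$.

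It remains to upgrade ``$H$ has a hypercentral Hall $p'$-subgroup'' to ``$H$ is nilpotent''. I would first note that $K$ is a Hall $\pi$-subgroup of the nilpotent group $\hyp{H}$, hence the unique such subgroup; being characteristic in $\hyp{H}$, which is normal in $H$, it is itself normal in $H$. The quotient $H/K$ has order a power of $p$ and is thus nilpotent. The decisive step is then a transitivity property of hypercentral embedding: if $N\trianglelefteq H$ with $N\leq\hyp{H}$ and $H/N$ is nilpotent, then $H$ is nilpotent. I would establish this by concatenating an $H$-central series running from $1$ to $N$ (available because $N\leq\hyp{H}$) with the preimage in $H$ of an $(H/N)$-central series running from the trivial subgroup of $H/N$ up to $H/N$ (available because $H/N=\hyp{H/N}$ by nilpotency); the resulting chain is an $H$-central series from $1$ to $H$, forcing $\hyp{H}=H$ and hence the nilpotency of $H$. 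Applying this with $N=K$ finishes the argument.

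The main obstacle is essentially this final lifting step, though it is a standard and largely routine fact. The only point demanding slight care is verifying that the relation $[M_{j+1},H]\leq M_j$ for the lifted terms $M_j$ follows correctly from centrality modulo $N$, which uses that $N$ lies below every lifted term so that $[M_{j+1},H]N/N\leq M_j/N$ translates back to $[M_{j+1},H]\leq M_j$. Everything else is a direct translation of the hypotheses into the setting of Theorem~\ref{teoA}, together with the elementary observation that a Hall $p'$-subgroup has $p$-power index.
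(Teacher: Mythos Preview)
Your proof is correct and follows the same route as the paper, namely specialising Theorem~\ref{teoA} to $\pi=p'$. The paper treats the final implication (hypercentral Hall $p'$-subgroup $\Rightarrow$ nilpotent) as immediate, since the proof of Theorem~\ref{teoA} already records that a group with a hypercentral Hall $\pi$-subgroup decomposes as $\rad{\pi}{H}\times\rad{\pi'}{H}$ with $\rad{\pi}{H}$ nilpotent; your explicit concatenated central series is a valid, if slightly longer, justification of the same step.
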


It is noteworthy that the information provided by the $\pi$-class size frequency function is sometimes quite restrictive, and thus to generalise known results that consider the whole multiset of class lengths is not always possible. In Remark \ref{example_mattarei} we will further discuss this feature.

At this point, it is convenient to point out that the positive results in the literature on this topic usually give a concrete criterion to read off the desired property from the multiset of class sizes (or character degrees). In our situation, however, we have not been able to achieve it in all cases because, as previously said, even the $\pi$-part of the order of the group is unknown. Exceptionally, and following the spirit of \cite{M}, we give the next sufficient condition to detect when the only $q$-elements with class length not divisible by $q$ are the central ones, where $q\in \pi$. Let us denote by $\mathcal{S}_{q'}(G_{\pi})$ the union of those conjugacy classes of $\pi$-elements of $G$ whose cardinalities are not divisible by the prime $q$.

\begin{theoremletters}
\label{teoC}
Let $G$ be finite group, let $\pi$ be a set of primes and let $q\in\pi$. Then $|\ze{G}|_q$ divides $|\mathcal{S}_{q'}(G_{\pi})|_q$. Further, if $|\ze{G}|_q=|\mathcal{S}_{q'}(G_{\pi})|_q$, then $\ze{Q}\leqslant\ze{G}$ where $Q$ is a Sylow $q$-subgroup $G$.
\end{theoremletters}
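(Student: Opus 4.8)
The plan is to exhibit two compatible group actions on $\mathcal{S}_{q'}(G_\pi)$ and read off both assertions from orbit counting. Write $S=\mathcal{S}_{q'}(G_\pi)$ and let $Z_q$ denote the unique Sylow $q$-subgroup of the abelian group $\ze{G}$, so that $|Z_q|=|\ze{G}|_q$ and $Z_q\trianglelefteq G$. First I would check that left multiplication is a well-defined action of $Z_q$ on $S$: if $g\in G_\pi$ and $z\in Z_q$ then $zg$ is again a $\pi$-element (as $z$ and $g$ commute and $q\in\pi$), while $\ce{G}{zg}=\ce{G}{g}$ because $z$ is central, so $zg$ has the same class size as $g$ and lies in $S$. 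This action is free, since $zg=g$ forces $z=1$; hence every orbit has size $|Z_q|$, so $|Z_q|$ divides $|S|$. As $|Z_q|$ is a power of $q$, it divides $|S|_q$, which is the first assertion.

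\textbf{Reduction of the equality case.} Writing $\bar S=S/Z_q$ for the orbit set of the free action above, we have $|S|=|Z_q|\cdot|\bar S|$, so the hypothesis $|\ze{G}|_q=|\mathcal{S}_{q'}(G_\pi)|_q$ is equivalent to $q\nmid|\bar S|$. Fix $Q\in\syl{q}{G}$; conjugation by $Q$ preserves $S$ (it permutes $G$-classes) and commutes with the left $Z_q$-multiplication, so it descends to an action of the $q$-group $Q$ on $\bar S$. By the standard fixed-point congruence, $|\bar S|\equiv|\mathrm{Fix}_Q(\bar S)|\pmod q$; combined with the previous sentence, it suffices to prove that $\ze{Q}\not\leqslant\ze{G}$ forces $q\mid|\mathrm{Fix}_Q(\bar S)|$.

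\textbf{Structure of the fixed points.} Next I would describe $\mathrm{Fix}_Q(\bar S)$. A coset $Z_qg$ is $Q$-fixed exactly when $g^x\in Z_qg$ for all $x\in Q$; the assignment $x\mapsto g^xg^{-1}$ is then a homomorphism $\phi_g\colon Q\to Z_q$ independent of the chosen representative, and $g^xg^{-1}\in Z_q\leqslant Q$ shows that every such $g$ normalises $Q$ and acts trivially on $Q/Z_q$. The ``untwisted'' part $\bar T_0$, where $\phi_g=1$, consists of the classes meeting $\ce{G}{Q}$. Here the key structural input is that $\ze{Q}$ is a central, hence normal, Sylow $q$-subgroup of $\ce{G}{Q}$, so by Burnside one has $\ce{G}{Q}=\ze{Q}\times K$ with $K$ a $q'$-group; left multiplication by $\ze{Q}$ then preserves the $\pi$-elements of $\ce{G}{Q}$ and induces a free action of $\ze{Q}/Z_q$ on $\bar T_0$. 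Consequently $|\ze{Q}:Z_q|$ divides $|\bar T_0|$, and this index is a nontrivial power of $q$ precisely when $\ze{Q}\not\leqslant\ze{G}$.

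\textbf{Main obstacle.} The remaining and hardest point is to control the ``twisted'' fixed points, those with $\phi_g\neq 1$: I expect to show that their number is divisible by $q$, so that $|\mathrm{Fix}_Q(\bar S)|\equiv|\bar T_0|\pmod q$ and the divisibility of $|\bar T_0|$ by $|\ze{Q}:Z_q|$ finishes the argument. The difficulty is that left multiplication by $\ze{Q}$ need not keep a twisted representative inside $S$, since multiplying by a $q$-element can destroy the $q$-regularity of the class size when the factors fail to commute; thus no naive free action is available on this part. To handle it I would instead count these classes through the $q$-part map $g\mapsto g_q$, whose nonempty fibres over a $q$-element $t$ are governed by the $(\pi\setminus\{q\})$-elements of $\ce{G}{t}$ with $q$-regular class size, and invoke the Frobenius theorem on the number of solutions of $x^n=1$ (flagged in the introduction), together with the central-Sylow splitting of the centralisers of $q$-elements, to force the required divisibility by $q$.
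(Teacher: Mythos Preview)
Your overall architecture coincides with the paper's: the free $Z_q$-action gives the divisibility, and then one lets $Q$ act by conjugation on the set $\bar S$ of $Z_q$-cosets in $S$ and counts fixed points modulo $q$. The divergence is in your ``main obstacle'': the twisted fixed points do not exist, and your own homomorphism $\phi_g$ already proves it. Indeed $\ker\phi_g=Q\cap\ce{G}{g}$, so $|\operatorname{im}\phi_g|=|Q:Q\cap\ce{G}{g}|$; but this index is the size of the $Q$-orbit of $g$ inside $g^G$, hence divides $|g^G|$, which is coprime to $q$ by the very definition of $\mathcal{S}_{q'}(G_\pi)$. Since $\operatorname{im}\phi_g\leqslant Z_q$ is a $q$-group, $\phi_g$ is trivial. (The paper phrases the same observation as $\ce{G/Z}{gZ}=\ce{G}{g}/Z$ for $g\in\mathcal{S}_{q'}(G_\pi)$, via the commutator map $C\to Z$.) So the elaborate plan with $q$-part maps and fibre counts is unnecessary; the ``hardest point'' is a one-line consequence of what you already wrote down.

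With $\operatorname{Fix}_Q(\bar S)=\bar T_0$ in hand, your Burnside-based endgame works: $\ce{G}{Q}=\ze{Q}\times K$ gives $|\bar T_0|=|\ze{Q}/Z_q|\cdot|K_\pi|$, so $q\nmid|\bar T_0|$ forces $\ze{Q}=Z_q\leqslant\ze{G}$. The paper instead applies Frobenius' theorem to $\ce{G}{Q}/Z$ to get that $|\ce{G}{Q}/Z|_\pi$ divides the number $t=|\bar T_0|$ of $\pi$-elements there; then $q\nmid t$ yields $|\ce{G}{Q}|_q=|Z|$ and hence $\ze{Q}=Z$. Both finishes are short; yours avoids Frobenius at the cost of invoking Burnside's normal $q$-complement theorem.
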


The second assertion in the above theorem is simply not true when $q\notin \pi$, it is enough to consider as $G$ the symmetric group on three letters, with $\pi=\{3\}$ and $q=2$. We also demonstrate that the converse holds whenever $\ce{G}{Q}$ has a normal Hall $\pi$-subgroup, and in addition we illustrate with an example that this condition is not necessary (see Proposition \ref{propC} and Example \ref{example_prop}).

In the previous results, evidence has been shown on how the $\pi$-class size frequency function gives information about properties of a Hall $\pi$-subgroup with regard to its immersion in the group. It is then natural to wonder whether this function may also provide properties of the Hall $\pi$-subgroup itself. To determine solubility or superso\-lubility, as occurs when $\pi$ is the set of prime divisors of the group order, are likewise open pro\-blems. The author has also been unable to decide if nilpotency is recognisable. However, abelianity is almost determined, up to some specific cases, and via the classification of finite simple groups; this contrasts with the situation when $\pi$ is the whole set of prime divisors of the group order, where abelianity is elementarily recognisable. We will further comment this issue in Remark \ref{remark_open}.


\section{Notation and preliminaries}

Hereafter, if $x$ is an element of a finite group $G$, then we denote by $x^G$ the conjugacy class of $x$ in $G$, and its size is $\abs{x^G}=\abs{G:\ce{G}{x}}$. We write $\pi(G)$ for the set of prime divisors of $\abs{G}$. For a positive integer $n$ and a prime number $p$, the $p$-part $n_{p}$ of $n$ is the largest power of $p$ that divides $n$. In particular, if $\pi$ is a set of primes, then the $\pi$-part $n_{\pi}$ of $n$ is the product of $n_p$ for each prime $p\in\pi$. If $n_{\pi'}=1$, then $n$ is said to be a $\pi$-number. As usual, the set of all Sylow $p$-subgroups of $G$ is denoted by $\syl{p}{G}$, and $\hall{\pi}{G}$ is the set of all Hall $\pi$-subgroups of $G$. A group is called $\pi$-decomposable if it has a Hall $\pi$-subgroup as a direct factor. We recall that the hypercentre $\hyp{G}$ of a group $G$ is the last term of its upper central series, i.e. the last term of the series $$1\leqslant\pmb{\text{Z}}_1(G)\leqslant \pmb{\text{Z}}_2(G)\leqslant\cdots,$$ where $\pmb{\text{Z}}_1(G)=\ze{G}$ and $\pmb{\text{Z}}_{i+1}(G)$ is defined by $\pmb{\text{Z}}_{i+1}(G)/\pmb{\text{Z}}_i(G)=\ze{G/\pmb{\text{Z}}_i(G)}$ for every integer $i\geq 1$. It is well-known that that $\hyp{G}$ is nilpotent, it is a characteristic subgroup of $G$, and $\hyp{G}=G$ if and only if $G$ is nilpotent (cf. \cite{H3}). Finally, and as mentioned in the Introduction, we denote by $G_{\pi}$ the set of $\pi$-elements of $G$; and if $\sigma$ is another set of primes, $\mathcal{S}_{\sigma}(G_{\pi})$ is the union of those conjugacy classes $g^G$ with $g\in G_{\pi}$ such that $|g^G|$ is divisible by primes in $\sigma$ only; thus its size is $$ |\mathcal{S}_{\sigma}(G_{\pi})| = \displaystyle\sum_{n_{\sigma'}=1} w^G_{\pi}(n)\cdot n,$$ where $w^G_{\pi}$ is the $\pi$-class size frequency function previously defined. In particular, we will write $\mathcal{S}_q(G_{\pi})$ when $\sigma$ consists of a single prime $q$, and $\mathcal{S}_{q'}(G_{\pi})$ when $\sigma=\{q\}'$. The remaining notation and terminology used is standard in the framework of finite group theory.

Let us state two preliminary results that will be needed later. The first one is an elementary lemma, whilst the second one is a celebrated theorem due to Frobenius.

\begin{lemma}
\label{direct-prod}
Let the finite group $G=A\times B$ be the direct product of two subgroups $A$ and $B$. Then the multiset of conjugacy class sizes in $G$ of the elements lying in $A$ coincides with the multiset of class sizes of $A$.
\end{lemma}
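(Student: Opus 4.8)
The plan is to identify $A$ with the subgroup $A\times 1$ of $G$ and to compare $G$-conjugacy directly with $A$-conjugacy on this subgroup. The first step is to observe that $A\times 1$ is a union of whole $G$-conjugacy classes: if $(a,1)\in A\times 1$ and $(x,y)\in G$, then $(x,y)(a,1)(x,y)^{-1}=(xax^{-1},1)$, which again lies in $A\times 1$. This same computation shows that the $G$-class of $(a,1)$ is exactly $\{(a',1):a'\in a^A\}$, so the assignment $a^A\mapsto (a,1)^G$ is a well-defined bijection between the conjugacy classes of $A$ and the $G$-conjugacy classes contained in $A\times 1$.

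The second step is to compare the sizes of corresponding classes. For $a\in A$ I would compute the centralizer $\ce{G}{(a,1)}$: an element $(x,y)$ commutes with $(a,1)$ precisely when $x$ commutes with $a$ in $A$, the second coordinate imposing no condition; hence $\ce{G}{(a,1)}=\ce{A}{a}\times B$. Consequently
$$|(a,1)^G|=\abs{G:\ce{G}{(a,1)}}=\frac{\abs{A}\,\abs{B}}{\abs{\ce{A}{a}}\,\abs{B}}=\abs{A:\ce{A}{a}}=\abs{a^A}.$$

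Combining the two steps, the bijection $a^A\mapsto (a,1)^G$ preserves class sizes, so the multiset of $G$-class sizes of the elements lying in $A$ coincides with the multiset of conjugacy class sizes of $A$, as required. The argument is entirely elementary; the only point deserving a little care is the verification that $A\times 1$ is genuinely a union of complete $G$-classes, so that one is comparing the two multisets class-by-class rather than merely matching individual element computations.
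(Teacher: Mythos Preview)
Your proof is correct and follows essentially the same approach as the paper: both rest on the observation that for $a\in A$ one has $a^G=a^A$ under the identification $A\cong A\times 1$. The paper simply states this fact and concludes, whereas you spell out the conjugation computation and the centralizer equality $\ce{G}{(a,1)}=\ce{A}{a}\times B$; your extra detail is harmless but not needed beyond what the paper records.
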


\begin{proof}
This follows immediately from the fact that every element $x\in A$ satisfies $x^G=x^A$.
\end{proof}

\medskip

\begin{theorem}
\label{frobenius}
If $n$ is a divisor of $|G|$ for a finite group $G$, then the number of solutions of $x^n=1$ in $G$ is a multiple of $n$.
\end{theorem}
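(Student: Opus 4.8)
The statement is a classical theorem of Frobenius, so one legitimate option is simply to quote it from a standard reference; nevertheless, let me outline the self-contained approach I would take. Write $N(n)=\abs{\{x\in G: x^n=1\}}$ for the quantity to be controlled. The key device is a cyclic group action: consider the set $T=\{(g_1,\dots,g_n)\in G^n : g_1\cdots g_n=1\}$, on which the cyclic group $C=\langle\sigma\rangle$ of order $n$ acts by the rotation $\sigma\cdot(g_1,\dots,g_n)=(g_n,g_1,\dots,g_{n-1})$; this is well defined since $g_1\cdots g_n=1$ forces $g_ng_1\cdots g_{n-1}=g_n(g_1\cdots g_n)g_n^{-1}=1$. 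Because the last coordinate of a tuple in $T$ is determined by the previous $n-1$, we have $\abs{T}=\abs{G}^{n-1}$, and since $n\mid\abs{G}$ with $n\geq 2$, this is divisible by $n$.

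Next I would pin down the fixed-point sets. A short computation shows that for each divisor $d\mid n$ the tuples fixed by $\sigma^d$ are exactly those of period dividing $d$, parametrised by $(g_1,\dots,g_d)$ with $(g_1\cdots g_d)^{n/d}=1$, so their number is $\abs{G}^{d-1}N(n/d)$; in particular the tuples fixed by $\sigma$ itself are the constant tuples $(g,\dots,g)$ with $g^n=1$, whose number is $N(n)$. Feeding these counts into the Möbius relation for the tuples of exact period $d$ (each such family is a union of $C$-orbits of size $d$, hence has cardinality divisible by $d$), every term indexed by a proper divisor carries a positive power of $\abs{G}$ and therefore vanishes modulo $n$. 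For $n$ prime this already concludes the argument cleanly, since the orbits of $T$ have size $1$ or $n$ and thus $N(n)\equiv\abs{T}=\abs{G}^{n-1}\equiv 0\pmod n$; likewise, the exact-period-$n$ count being divisible by $n$ forces $n\mid\mu(n)N(n)$, which settles every squarefree $n$.

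The main obstacle I anticipate is the prime-power case $n=p^a$ with $a\geq 2$ (from which the general composite case must then be assembled). Here the rotation action degenerates: the Möbius coefficient $\mu(p^a)$ vanishes and Burnside's orbit count delivers only $n\mid\phi(n)N(n)$ with $\gcd(\phi(n),n)>1$, so one recovers nothing beyond divisibility by $p$. To overcome this I would switch to an induction on $\abs{G}$, reducing to a proper subgroup or quotient by exploiting the structure around a Sylow $p$-subgroup (for instance peeling off a central subgroup of order $p$, or localising inside the normaliser of a suitable $p$-subgroup) and then splitting each solution via its coprime-order factorisation $x=x_p x_{p'}$ to recombine the inductive counts. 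This structural bootstrap from the transparent prime case to arbitrary $n$ is the delicate portion of the proof, and is precisely where Frobenius's original inductive argument does the real work.
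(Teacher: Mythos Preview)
The paper gives no self-contained argument here: its entire proof is a pair of citations, to Huppert's character-theoretic treatment and to Hall's group-theoretic one. You explicitly name ``simply to quote it from a standard reference'' as a legitimate option in your opening sentence, so at that level your proposal already matches the paper exactly.

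The extra material you supply---the cyclic rotation action on $T=\{(g_1,\dots,g_n):g_1\cdots g_n=1\}$---is sound as far as you take it: the orbit count yields $n\mid N(n)$ for $n$ prime, and the M\"obius step genuinely extends this to squarefree $n$. You are also right that the rotation action alone stalls at prime powers $n=p^{a}$ with $a\geq 2$, and that an induction on $|G|$ is the standard way out; this is essentially the route in Hall's textbook that the paper cites. Since you openly label that step as the unfinished ``delicate portion'', there is no hidden gap---but note that your parenthetical ``from which the general composite case must then be assembled'' also requires a short argument (grouping solutions by the conjugacy class of their $p'$-part and using the prime-power case inside each centraliser), so if you ever want the sketch to become a proof, both pieces would still need to be written out.
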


\begin{proof}
See \cite[9.9 Theorem (b)]{H3} for a proof based on character theory, or \cite[Theorem 9.1.2]{Hall} for a group-theoretic proof.
\end{proof}


\section{Proof of main results}

We point out that the proof of Theorem \ref{teoA} differs from that originally presented in \cite{CHM}; instead, we have followed the ideas in \cite[Theorem 23.5]{H3} with suitable changes.

\begin{theorem}
\label{theo-hyp}
Let $G$ be a group, and $\pi$ a set of primes. Then for every $q\in\pi$ it holds $$|\mathcal{S}_q(G_{\pi})|_q=|\hyp{G}|_q.$$ In particular, a Sylow $q$-subgroup of $G$ is hypercentral if and only if $|G|_q=|\mathcal{S}_q(G_{\pi})|_q$.
\end{theorem}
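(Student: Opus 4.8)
The plan is to prove the displayed equality $|\mathcal{S}_q(G_{\pi})|_q = |\hyp{G}|_q$ by induction on $|G|$; the final assertion then follows at once, since $\hyp{G}$ is normal and nilpotent, so that $|\hyp{G}|_q = |G|_q$ holds exactly when the (necessarily normal) Sylow $q$-subgroup of $\hyp{G}$ is a full Sylow $q$-subgroup of $G$, i.e. when a Sylow $q$-subgroup of $G$ lies in $\hyp{G}$. I would split the induction according to whether $\rad{q}{\ze{G}}$ is trivial. Observe first that a nontrivial normal $q$-subgroup contained in $\hyp{G}$ meets $\ze{G}$ nontrivially (it meets the lowest term of the upper central series it reaches), so $\rad{q}{\ze{G}}=1$ forces $\rad{q}{\hyp{G}}=1$; thus the vanishing of $\rad{q}{\ze{G}}$ is precisely the base case $|\hyp{G}|_q=1$.

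For the inductive step, assume $\rad{q}{\ze{G}}\neq 1$ and pick a central element $z$ of order $q$; set $\bar{G}=G/\langle z\rangle$ and let $\phi$ be the projection. The key reduction I would establish is
\[
|\mathcal{S}_q(G_{\pi})| = q\cdot|\mathcal{S}_q(\bar{G}_{\pi})|,
\]
by showing that $\mathcal{S}_q(G_{\pi})$ is exactly the full $\phi$-preimage of $\mathcal{S}_q(\bar{G}_{\pi})$. The easy inclusion is that $\phi$ sends $\mathcal{S}_q(G_{\pi})$ into $\mathcal{S}_q(\bar{G}_{\pi})$, because $\pi$-elements map to $\pi$-elements and $|\bar{g}^{\bar{G}}|$ divides $|g^G|$. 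The substantive inclusion is that if $\bar{g}\in\mathcal{S}_q(\bar{G}_{\pi})$ then the whole coset $\phi^{-1}(\bar{g})$ (which consists of $\pi$-elements, as $\langle z\rangle$ is a $q$-group) lies in $\mathcal{S}_q(G_{\pi})$. For this I would fix, for each prime $r\neq q$, a Sylow $r$-subgroup $R$ whose image centralises $\bar{g}$, so that $[R,w]\leqslant\langle z\rangle$ for $w\in\phi^{-1}(\bar{g})$; since $\langle z\rangle$ is central, $h\mapsto[w,h]$ is a homomorphism from the $r$-group $R$ into the $q$-group $\langle z\rangle$, hence trivial, giving $R\leqslant\ce{G}{w}$. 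As this holds for every $r\neq q$, the size $|w^G|$ is a power of $q$ and $w\in\mathcal{S}_q(G_{\pi})$. In parallel I would record the matching hypercentral identity $|\hyp{G}|_q=q\cdot|\hyp{\bar{G}}|_q$: the image of $\rad{q}{\hyp{G}}$ lands in $\rad{q}{\hyp{\bar{G}}}$, while a standard extension property of the upper central series shows that any $q$-subgroup of $\hyp{\bar{G}}$ pulls back into $\hyp{G}$, forcing $\rad{q}{\hyp{\bar{G}}}=\rad{q}{\hyp{G}}/\langle z\rangle$. Combining the two displays with the inductive hypothesis applied to $\bar{G}$ yields $|\mathcal{S}_q(G_{\pi})|_q=q\cdot|\mathcal{S}_q(\bar{G}_{\pi})|_q=q\cdot|\hyp{\bar{G}}|_q=|\hyp{G}|_q$.

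For the base case $\rad{q}{\ze{G}}=1$ (so $|\hyp{G}|_q=1$), the goal becomes $q\nmid|\mathcal{S}_q(G_{\pi})|$. Here I would let a Sylow $q$-subgroup $P$ of $G$ act on $\mathcal{S}_q(G_{\pi})$ by conjugation, which preserves both the property of being a $\pi$-element and the class size; counting orbits gives $|\mathcal{S}_q(G_{\pi})|\equiv|\mathcal{S}_q(G_{\pi})\cap\ce{G}{P}|\pmod q$. The crucial observation is that any $g$ centralising the whole of $P$ satisfies $P\leqslant\ce{G}{g}$, so that $q\nmid|g^G|$; if in addition $|g^G|$ is a power of $q$, then $|g^G|=1$, i.e. $g\in\ze{G}$. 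Hence the fixed set equals the set of $\pi$-elements of $\ze{G}$, of cardinality $|\ze{G}|_{\pi}=|\ze{G}|_{\pi\setminus\{q\}}$, which is prime to $q$ since $\rad{q}{\ze{G}}=1$. This gives $|\mathcal{S}_q(G_{\pi})|_q=1=|\hyp{G}|_q$.

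The main obstacle I anticipate is not any single computation but obtaining the \emph{exact} $q$-part rather than mere divisibility: a direct treatment of $\sum_{n=q^{j}}w^G_{\pi}(n)\,n$ only yields congruences modulo $q$, as the base-case argument already illustrates. The device that overcomes this is the clean multiplicative reduction $|\mathcal{S}_q(G_{\pi})|=q\cdot|\mathcal{S}_q(\bar{G}_{\pi})|$, and the delicate point within it is the forward inclusion of entire cosets into $\mathcal{S}_q(G_{\pi})$, where the centrality of $z$ (making the commutator map a homomorphism into a $q$-group) is exactly what is required.
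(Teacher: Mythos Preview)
Your proof is correct and follows essentially the same inductive scheme as the paper: split on whether $q$ divides $|\ze{G}|$ (equivalently $|\hyp{G}|$), and in the nontrivial case quotient by a central subgroup $\langle z\rangle$ of order $q$ to obtain the key reduction $|\mathcal{S}_q(G_{\pi})|=q\cdot|\mathcal{S}_q(\bar{G}_{\pi})|$, then combine with $\hyp{\bar{G}}=\overline{\hyp{G}}$. The one noteworthy difference is in how the reduction is verified: you prove the set equality $\mathcal{S}_q(G_{\pi})=\phi^{-1}(\mathcal{S}_q(\bar{G}_{\pi}))$ directly by a Sylow-by-Sylow commutator argument (for each $r\neq q$ the map $h\mapsto[w,h]$ from an $r$-group into the central $q$-group $\langle z\rangle$ is trivial), whereas the paper instead analyses, class by class, how each $\bar{g}^{\bar{G}}\subseteq\mathcal{S}_q(\bar{G}_{\pi})$ lifts---either to $q$ classes of size $|\bar{g}^{\bar{G}}|$ or to one class of size $q\cdot|\bar{g}^{\bar{G}}|$---and then sums.
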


\begin{proof}
We argue by induction on $|G|$. Since $\mathcal{S}_q(G_{\pi})$ is the set of $\pi$-elements of $G$ whose class sizes are powers of $q$, then it certainly holds that $|\mathcal{S}_q(G_{\pi})|\equiv |\ze{G}|_{\pi} \; (\text{mod } q)$. Thus, if $q$ does not divide $|\hyp{G}|$, then it cannot divide $|\ze{G}|_{\pi}$ either, so $|\mathcal{S}_q(G_{\pi})|_q=1=|\hyp{G}|_q$ as wanted.

Now we may suppose that $q$ is a prime divisor of $|\hyp{G}|$. It is well-known that then $q$ also divides $|\ze{G}|$ (see for instance \cite[Lemma 3]{P}), so we can take a normal subgroup $N\leqslant \ze{G}$ of order $q$. Let us write $\overline{G}=G/N$. We claim that \begin{eqnarray} & |\mathcal{S}_q(\overline{G}_{\pi})|\cdot q=|\mathcal{S}_q(G_{\pi})|. \label{eq} \end{eqnarray} Let $\overline{g}\in\overline{G}$, and denote by $\overline{C}=\ce{\overline{G}}{\overline{g}}$. Let $\alpha:C\longrightarrow N$ be the map defined by $\alpha(x)=[g,x]\in N$, for every $x\in C$. Since $N\leqslant\ze{G}$, observe that $\alpha(xy)=[g,xy]=[g,y][g,x]^y=[g,y][g,x]=[g,x][g,y]=\alpha(x)\alpha(y)$ for every $x,y\in C$, so $\alpha$ is a group homomorphism. As $\ker(\alpha)=\ce{G}{g}$, it holds that $|C/\ce{G}{g}|$ divides $|N|=q$. 

Take $\overline{g}\in\mathcal{S}_q(\overline{G}_{\pi})$, i.e. a $\pi$-element $\overline{g}\in \overline{G}$ such that $|\overline{g}^{\overline{G}}|=q^n$ for some integer $n\geq 0$. We may clearly assume that $g$ is a $\pi$-element. Set $N=\langle z\rangle$. As $|C/\ce{G}{g}|$ divides $|N|=q$, then either $C=\ce{G}{g}$ or $|C|=q\cdot|\ce{G}{g}|$. In the former case $\overline{C}=\overline{\ce{G}{g}}$, and since $N\leqslant \ze{G}$ and $q\in \pi$, then $\{g, gz, ..., gz^{q-1}\}$ are $q$ distinct $\pi$-elements of $G$, and it is not difficult to show that $\{g^G, (gz)^G, ..., (gz^{q-1})^G\}$ are $q$ distinct conjugacy classes that are preimages of $\overline{g}^{\overline{G}}$. Moreover, they all have the same length because $|(gz^i)^G|=|(g^G)z^i|=|g^G|$ for every $0\leq i \leq q-1$, and this length is precisely $$|g^G|=|G:\ce{G}{g}|=|\overline{G}:\overline{C}|=|\overline{g}^{\overline{G}}|.$$ On the other hand, in the latter case $|C|=q\cdot|\ce{G}{g}|$, so $$|g^G|=|G:\ce{G}{g}|=q\cdot |G:C|=q\cdot |\overline{G}:\overline{C}|=q\cdot |\overline{g}^{\overline{G}}|,$$ and one can check that the unique preimage of $\overline{g}^{\overline{G}}$ in this case is $g^G$. Consequently, each $\overline{g}^{\overline{G}}$, where $\overline{g}\in\mathcal{S}_q(\overline{G}_{\pi})$, yields either $q$ conjugacy classes of $\pi$-elements of $G$ with the same size as $|\overline{g}^{\overline{G}}|$, or one conjugacy class of a $\pi$-element of $G$ with size exactly $q\cdot |\overline{g}^{\overline{G}}|$. Hence, for every integer $i\geq 0$, we can decompose $w^{\overline{G}}_{\pi}(q^i)=\beta^{\overline{G}}_{\pi}(q^i) + \gamma^{\overline{G}}_{\pi}(q^i)$, where $\beta^{\overline{G}}_{\pi}(q^i)$ and $\gamma^{\overline{G}}_{\pi}(q^i)$ denote the number of conjugacy classes of $\pi$-elements of $\overline{G}$ that correspond to each of the mentioned type of classes. Thus \begin{eqnarray*}
|\mathcal{S}_q(G_{\pi})| & =& \displaystyle\sum_{i=0}^{\infty} q^i\cdot q\cdot \beta^{\overline{G}}_{\pi}(q^i) + \sum_{i=0}^{\infty} q^{i+1}\cdot \gamma^{\overline{G}}_{\pi}(q^i) \\ & = & \sum_{i=0}^{\infty} q^{i+1}\cdot (\beta^{\overline{G}}_{\pi}(q^i)+\gamma^{\overline{G}}_{\pi}(q^i)) \\
& = & q\cdot \sum_{i=0}^{\infty} q^{i}\cdot w^{\overline{G}}_{\pi}(q^i) \\
& = & q\cdot |\mathcal{S}_q(\overline{G}_{\pi})|,
\end{eqnarray*} so equation equation (\ref{eq}) is established. Finally, since $N$ is central in $G$, then $\hyp{\overline{G}}=\overline{\hyp{G}}$ by the definition of the hypercentre. Now using induction and equation (\ref{eq}) we obtain $$|\mathcal{S}_q(G_{\pi})|_q=q\cdot |\mathcal{S}_q(\overline{G}_{\pi})|_q=q\cdot |\hyp{\overline{G}}|_q=q\cdot|\overline{\hyp{G}}|_q=|\hyp{G}|_q,$$ as desired. The last assertion of the theorem is straightforward. 
\end{proof}

\begin{remark} 
(1) The above theorem is simply not true whenever $q\notin \pi$. It is enough to consider any $q$-group $G$, which has a unique conjugacy class of $\pi$-elements (the trivial one) with cardinality a $q$-power, for any set of primes $\pi$ that does not contain $q$, so $|\mathcal{S}_q(G_{\pi})|=1\neq |G|$.

\noindent (2) We have previously mentioned that from the $\pi$-class size frequency function of $G$ it is possible to retrieve both $|\ze{G}|_{\pi}$ and $|\hyp{G}|_{\pi}$. However, we point out that the $\pi$-part $|\pmb{\text{Z}}_i(G)|_{\pi}$ for the intermediate terms of the upper central series of $G$ cannot be computed, even when $\pi=\pi(G)$, as it was shown in \cite[Example 1]{CHM}.

\noindent (3) If we take $\pi=\{q\}$ for a given prime $q$, then Theorem \ref{theo-hyp} yields a $q$-decomposability criterion for a group based on the multiset of class lengths of its $q$-elements only.

\noindent (4) Theorem \ref{theo-hyp} asserts that a group is $q$-decomposable if and only if $|\mathcal{S}_q(G_{\pi})|_q$ is as large as possible, for any set of primes $\pi$ that contains $q$.
\end{remark}

The result below is \cite[Theorem]{CHM}. For the sake of completeness, we also give the proof.

\begin{corollary}
Let $G$ and $H$ be finite groups. Suppose that $G$ is nilpotent, and that its conjugacy class sizes joint with their multiplicities are the same as those of $H$. Then $H$ is nilpotent.
\end{corollary}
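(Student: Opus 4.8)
The plan is to derive this statement directly from Theorem~\ref{theo-hyp}, applied with $\pi=\pi(G)$, so that the substantive work is entirely delegated to that theorem. The first step is to recover the group order: summing the class sizes with their multiplicities reproduces $|G|$ through the class equation, since $\sum_{n} w^G_\pi(n)\cdot n=|G|$ when $\pi=\pi(G)$. Hence the hypothesis forces $|G|=|H|$, and in particular $\pi(G)=\pi(H)$. I therefore fix this common set of primes and call it $\pi$, which has the pleasant consequence that every element of either group is a $\pi$-element, i.e. $G_\pi=G$ and $H_\pi=H$.

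With this choice the $\pi$-class size frequency functions $w^G_\pi$ and $w^H_\pi$ are nothing but the ordinary class size frequency functions of $G$ and $H$, which agree by assumption. Consequently $|\mathcal{S}_q(G_\pi)|=|\mathcal{S}_q(H_\pi)|$ for every prime $q\in\pi$, since each side is computed from the common frequency function via $\sum_{n_{q'}=1}w_\pi(n)\cdot n$. This is the step that transports all the arithmetic we shall need from $G$ to $H$, and it is purely a matter of bookkeeping.

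Next I would feed in the nilpotency of $G$. Since $G$ is nilpotent we have $\hyp{G}=G$, so Theorem~\ref{theo-hyp} gives $|\mathcal{S}_q(G_\pi)|_q=|\hyp{G}|_q=|G|_q$ for every $q\in\pi$. Combining this with the equality of the previous paragraph and with $|G|=|H|$, I obtain $|\mathcal{S}_q(H_\pi)|_q=|G|_q=|H|_q$ for every $q\in\pi$. The last assertion of Theorem~\ref{theo-hyp}, now applied to $H$, then shows that a Sylow $q$-subgroup of $H$ is hypercentral for every $q\in\pi(H)$; equivalently $|\hyp{H}|_q=|H|_q$ for each such $q$.

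Finally I would conclude that $\hyp{H}=H$: as $|\hyp{H}|_q=|H|_q$ holds for every prime $q$ dividing $|H|$, the orders of $\hyp{H}$ and $H$ coincide, and since $\hyp{H}\leqslant H$ this forces $\hyp{H}=H$, which is exactly the nilpotency of $H$. The only point demanding care is the identification carried out in the second step, namely checking that restricting to $\pi=\pi(G)$ genuinely turns the $\pi$-frequency function into the full class size frequency function on both groups simultaneously; beyond this routine verification there is no real obstacle, as everything of depth has already been absorbed into Theorem~\ref{theo-hyp}.
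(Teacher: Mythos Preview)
Your proposal is correct and follows essentially the same approach as the paper: both recover $|G|=|H|$ from the class equation, apply Theorem~\ref{theo-hyp} with $\pi=\pi(G)$ to obtain $|\mathcal{S}_q(G_\pi)|_q=|G|_q$ from the nilpotency of $G$, transfer this to $H$ via the shared frequency function, and invoke Theorem~\ref{theo-hyp} again to conclude that every Sylow subgroup of $H$ is hypercentral. Your write-up is somewhat more explicit about the final passage from ``all Sylow subgroups hypercentral'' to $\hyp{H}=H$, but the argument is the same.
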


\begin{proof}
The hypotheses clearly lead to $|G|=|H|$, because both numbers can be computed from the class sizes and their multiplicities. Since $G$ is nilpotent, then every Sylow subgroup of $G$ is hypercentral, so applying Theorem \ref{theo-hyp} with $\pi=\pi(G)$ it follows for every prime $q\in\pi$ that $|\mathcal{S}_q(G)|_q=|\hyp{G}|_q=|G|_q$. Therefore we get $$ |\mathcal{S}_q(H)|_q=|\mathcal{S}_q(G)|_q=|G|_q=|H|_q,$$ where the first equality is due to our assumptions. Thus, by Theorem \ref{theo-hyp} again, we can affirm that $H$ has hypercentral Sylow subgroups for every prime $q\in\pi(G)=\pi(H)$, so $H$ is nilpotent.
\end{proof}

\medskip

\begin{proof}[Proof of Theorem \ref{teoA}]
Let us suppose that $G$ and $H$ have the same class sizes of $\pi$-elements, taking into account their multiplicities, and that $G$ has a hypercentral Hall $\pi$-subgroup. Hence $G=\rad{\pi}{G}\times\rad{\pi'}{G}$ with $\rad{\pi}{G}$ nilpotent, and by Lemma \ref{direct-prod} the multiset of class sizes in $G$ of its $\pi$-elements is actually the whole multiset of class sizes of $\rad{\pi}{G}$. In particular we can compute $|G|_{\pi}=|\rad{\pi}{G}|$.

Since the class sizes of $\pi$-elements of $H$ and their multiplicities are the same as those of $G$, and they add up to $|G|_{\pi}$, then applying Theorem \ref{frobenius} to $H$ with $n=|H|_{\pi}$ we get that the number of solutions of $x^{|H|_{\pi}}=1$ in $H$ is a multiple of $|H|_{\pi}$. In other words, $|H|_{\pi}$ divides the number of $\pi$-elements of $H$, and we have that this last number equals $|G|_{\pi}$. On the other hand, for every prime $q\in \pi$, we have $$|G|_q=|\hyp{G}|_q=|\mathcal{S}_q(G_{\pi})|_q=|\mathcal{S}_q(H_{\pi})|_q=|\hyp{H}|_q,$$ where the first equality follows from the fact that $G=\rad{\pi}{G}\times\rad{\pi'}{G}$ with $\rad{\pi}{G}$ nilpotent, the second and fourth ones are due to Theorem \ref{theo-hyp}, and the third one is due to our assumptions. It follows that $|G|_q=|\hyp{H}|_q$ divides $|H|_q$ for every $q\in\pi$, and since we have also proved that $|H|_{\pi}$ divides $|G|_{\pi}$, then we get $|G|_q=|\hyp{H}|_q=|H|_q$ for every $q\in\pi$. Thus $H$ has a hypercentral Hall $\pi$-subgroup.
\end{proof}

\begin{remark}
\label{example_mattarei}
As discussed earlier, to generalise known results that consider the whole multiset of class lengths is not always possible. For instance, S. Mattarei proved that $G$ has a Sylow $q$-subgroup as a direct factor if and only if the number of elements with class lengths not divisible by $q$ is exactly $|G|_{q'}\cdot |\ze{G}|_q$, where $q$ is a prime number (cf. \cite[Theorem 3.3]{M}). Observe that, if $G=\rad{q}{G}\times \rad{q'}{G}$, then in the most extreme case with $\pi=\{q\}$ it holds that $|\mathcal{S}_{q'}(G_{\pi})|=|\ze{G}|_q$, so it would be natural to wonder whether \emph{a Sylow $q$-subgroup is a direct factor of $G$ if and only if the number of $q$-elements with class lengths not divisible by $q$ is exactly $|\ze{G}|_q$}. But this is certainly not true, since there are two $2$-elements in the group $SL(2,3)$ with class sizes not divisible by $2$, the order of $\ze{G}$ is $2$, and the Sylow $2$-subgroup is not a direct factor.
\end{remark}

\begin{proof}[Proof of Theorem \ref{teoC}]
Let $q\in \pi$ be a prime, and $Q\in\syl{q}{G}$. We aim to prove that $|\ze{G}|_q$ divides $|\mathcal{S}_{q'}(G_{\pi})|_q$, and if equality holds, then $\ze{Q}\leqslant\ze{G}$. Set $Z:=\ze{G}\cap Q$, so $Z\in\syl{q}{\ze{G}}$ and it is a normal subgroup of $G$. Notice that $\mathcal{S}_{q'}(G_{\pi})$ can be written as a union of cosets of $Z$: this follows from the fact that, if $g\in\mathcal{S}_{q'}(G_{\pi})$, since $Z$ is a central $q$-group with $q\in\pi$, then every $z\in Z$ satisfies that $gz\in G_{\pi}$ and $|(gz)^G|=|(g^G)z|=|g^G|$, so $gZ\subseteq \mathcal{S}_{q'}(G_{\pi})$. Thus the first assertion is established.

Henceforth we suppose that $|\ze{G}|_q=|\mathcal{S}_{q'}(G_{\pi})|_q$. We claim that if $g\in\mathcal{S}_{q'}(G_{\pi})$, then $\ce{G/Z}{gZ}\subseteq \ce{G}{g}/Z$, and since the other containment is trivial, then equality follows. Set $C/Z=\ce{G/Z}{gZ}$. Identically as in the proof of Theorem \ref{theo-hyp}, as $Z\leqslant\ze{G}$, it holds that the map $\alpha: C\longrightarrow Z$ defined by $\alpha(x)=[g,x]$ for every $x\in C$ is a group homomorphism, whose kernel is $\ce{G}{g}$. Therefore $|C/\ce{G}{g}|$ divides $|Z|$. But $Z$ is a $q$-group, and $|C/\ce{G}{g}|$ divides $|G:\ce{G}{g}|=|g^G|$ which is a $q'$-number because $g\in\mathcal{S}_{q'}(G_{\pi})$, so $C=\ce{G}{g}$ as wanted. 

Let us denote by $\mathcal{A}$ the set of cosets of $Z$ in $\mathcal{S}_{q'}(G_{\pi})$, and consider the action of $Q/Z$ by conjugation on $\mathcal{A}$. By the previous paragraph, we have that a coset $gZ$ is fixed by $Q/Z$ if and only if $Q$ fixes $g$, which happens if and only if $gZ$ is a $\pi$-element lying in $\ce{G}{Q}/Z$. Let us denote by $t$ the number of $\pi$-elements in $\ce{G}{Q}/Z$. The sizes of the orbits lead to $$|\mathcal{S}_{q'}(G_{\pi})|/|Z|=|\mathcal{A}| \equiv t \; (\text{mod } q).$$ Recall that $Z=\ze{G}\cap Q$. We deduce that if $|\ze{G}|_q=|\mathcal{S}_{q'}(G_{\pi})|_q$, then $t$ is not divisible by $q$. Since $t$ is the number of $\pi$-elements that lie in $\ce{G}{Q}/Z$, then $t$ is a multiple of $|\ce{G}{Q}/Z|_{\pi}$ by Theorem \ref{frobenius}. Thus, if $t$ is not divisible by $q$, then $|\ce{G}{Q}/Z|_{\pi}$ is not either, so $|\ce{G}{Q}|_q=|Z|$. This last feature occurs if and only if $Z=\ze{Q}$ because $\ze{Q}$ is the unique Sylow $q$-subgroup of $\ce{G}{Q}$. 
\end{proof}

\begin{remark}
Theorem \ref{teoC} admits the following restatement: if $|\mathcal{S}_{q'}(G_{\pi})|_q$ is as small as possible, then the unique $q$-elements with class size not divisible by $q$ are the central ones.
\end{remark}

\begin{example}
The converse of the second assertion in Theorem \ref{teoC} does not hold in general. To see this, let $G$ be the direct product of the cyclic group of order 3 and the dihedral group of order 10. Set $\pi=\{2,3\}$ and $q=3$. The centre of $G$ coincides with the unique Sylow $3$-subgroup $Q$ of $G$, so certainly $\ze{Q}\leqslant\ze{G}$. Nonetheless, $|\mathcal{S}_{3'}(G_{\pi})|_3=|G_{\pi}|_3=18_3=9\neq 3=|\ze{G}|_3$. 
\end{example}

We give below a sufficient condition for the converse of Theorem \ref{teoC} to be true.

\begin{proposition}
\label{propC}
Let $G$ be finite group, let $\pi$ be a set of primes. Let $Q\in\syl{q}{G}$ for a prime $q\in\pi$. If $\ze{Q}\leqslant\ze{G}$ and $\ce{G}{Q}$ has a normal Hall $\pi$-subgroup, then $|\ze{G}|_q=|\mathcal{S}_{q'}(G_{\pi})|_q$.
\end{proposition}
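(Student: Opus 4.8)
The plan is to reuse the counting machinery already set up in the proof of Theorem \ref{teoC}, and to reduce the desired equality to a single divisibility about the number of $\pi$-elements of a certain quotient — a quantity that the hypothesis on $\ce{G}{Q}$ is designed precisely to control. First I would fix the notation of that proof: take $Q\in\syl{q}{G}$, put $C:=\ce{G}{Q}$ and $Z:=\ze{G}\cap Q\in\syl{q}{\ze{G}}$, so that $|Z|=|\ze{G}|_q$ and $Z\trianglelefteq G$. The hypothesis $\ze{Q}\leqslant\ze{G}$ translates into $Z=\ze{Q}$: indeed $Z\leqslant Q$ is central in $G$, hence $Z\leqslant\ze{Q}$, while $\ze{Q}\leqslant\ze{G}$ together with $\ze{Q}\leqslant Q$ gives $\ze{Q}\leqslant Z$. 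Moreover, as already noted in the proof of Theorem \ref{teoC}, $\ze{Q}$ is the unique Sylow $q$-subgroup of $C$ (any $q$-subgroup of $C$ commutes with $Q$, so together with $Q$ it generates a $q$-group and therefore lies in $Q\cap C=\ze{Q}$); consequently $Z=\ze{Q}$ is normal in $C$ and $|C|_q=|Z|$.

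Next I would import verbatim the two facts from the proof of Theorem \ref{teoC} that do \emph{not} rely on any equality hypothesis. Writing $\mathcal{A}$ for the set of cosets of $Z$ contained in $\mathcal{S}_{q'}(G_{\pi})$, one has $|\mathcal{S}_{q'}(G_{\pi})|=|Z|\cdot|\mathcal{A}|$, and the conjugation action of the $q$-group $Q/Z$ on $\mathcal{A}$ yields $|\mathcal{A}|\equiv t\pmod q$, where $t$ is the number of $\pi$-elements of $C/Z$. Since $Z$ is a $q$-group, this gives $|\mathcal{S}_{q'}(G_{\pi})|_q=|Z|\cdot|\mathcal{A}|_q=|\ze{G}|_q\cdot|\mathcal{A}|_q$, so the proposition reduces to proving $q\nmid|\mathcal{A}|$, and by the congruence it suffices to show $q\nmid t$.

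The crux, and the only place the second hypothesis enters, is the exact evaluation of $t$. Let $K$ be a normal Hall $\pi$-subgroup of $C$. As $Z$ is a normal $\pi$-subgroup of $C$, it is contained in $K$, so $K/Z$ is a normal Hall $\pi$-subgroup of $C/Z$. A normal Hall $\pi$-subgroup contains every $\pi$-element (its quotient being a $\pi'$-group) and consists only of $\pi$-elements, so the $\pi$-elements of $C/Z$ are exactly the elements of $K/Z$; hence $t=|K/Z|=|C|_{\pi}/|Z|$. Combining this with $|Z|=|C|_q$ from the first step yields $t=|C|_{\pi\smallsetminus\{q\}}$, which is coprime to $q$. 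Therefore $q\nmid t$, so $q\nmid|\mathcal{A}|$, and finally $|\mathcal{S}_{q'}(G_{\pi})|_q=|\ze{G}|_q$, as desired.

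I expect the main obstacle to be exactly this last count, and it is worth isolating why the hypothesis is needed there. In an arbitrary group, Theorem \ref{frobenius} only guarantees that the number of $\pi$-elements of $C/Z$ is a \emph{multiple} of $|C/Z|_{\pi}$, and such a multiple may well be divisible by $q$; the assumption that $\ce{G}{Q}$ possesses a normal Hall $\pi$-subgroup is precisely what forces this count to equal the $q$-free value $|C/Z|_{\pi}$. This also makes transparent why the condition is sufficient but not necessary: it is merely a convenient way to secure $q\nmid t$, whereas the equality can survive under weaker circumstances, as the accompanying example shows.
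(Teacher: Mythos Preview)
Your proof is correct and follows essentially the same route as the paper: set $Z=\ze{Q}$, reuse the coset-counting action of $Q/Z$ on $\mathcal{A}$ from the proof of Theorem~\ref{teoC} to obtain $|\mathcal{A}|\equiv t\pmod q$, and then use the normal Hall $\pi$-subgroup hypothesis to compute $t$ exactly as a $q'$-number. Your write-up is in fact slightly more explicit than the paper's in justifying that the normal Hall $\pi$-subgroup of $\ce{G}{Q}$ passes to one of $\ce{G}{Q}/Z$ and that $|\ce{G}{Q}|_q=|Z|$.
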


\begin{proof}
Set $Z:=Q\cap\ze{G}$. If $\ze{Q}\leqslant\ze{G}$, then $Z=\ze{Q}$, and $\ce{G}{Q}/Z$ is a $q'$-group. Similarly as in the proof of Theorem \ref{teoC}, let us consider the action of $Q/Z$ by conjugation on the set $\mathcal{A}$ of cosets of $Z$ in $\mathcal{S}_{q'}(G_{\pi})$. Hence $$|\mathcal{S}_{q'}(G_{\pi})|/|Z|=|\mathcal{A}|\equiv t\; (\text{mod } q),$$ where $t$ is the number of $\pi$-elements in $\ce{G}{Q}/Z$. Since $\ce{G}{Q}/Z$ has a unique Hall $\pi$-subgroup by assumptions, then it follows $t=|\ce{G}{Q}/Z|_{\pi}$. But we have seen that $\ce{G}{Q}/Z$ is a $q'$-group, and then $q$ does not divide $|\mathcal{S}_{q'}(G_{\pi})|/|Z|$, which leads to $|Z|_q=|\ze{G}|_q=|\mathcal{S}_{q'}(G_{\pi})|_q$.
\end{proof}

\begin{remark}
It is significant to mention that the previous condition regarding the existence of a normal Hall $\pi$-subgroup in $\ce{G}{Q}$ is trivially satisfied if $\pi=\{q\}$, since $\ze{Q}$ is its unique Sylow $q$-subgroup. Furthermore, if $\pi=\pi(G)$, then that assumption is also superfluous, and in particular we recover \cite[Theorem 3.1]{M}. 
\end{remark}

\begin{example}
\label{example_prop}
The assumption in Proposition \ref{propC} about the existence of a normal Hall $\pi$-subgroup in $\ce{G}{Q}$ is not necessary. To see this, consider as $G$ the direct product of the cyclic group of order $5$ and the symmetric group on three letters. Set $\pi=\{2,5\}$ and $q=5$. Then $\ce{G}{Q}=G$ does not have a normal Hall $\pi$-subgroup, the centre of the Sylow $5$-subgroup $Q$ of $G$ is central, and both $|\ze{G}|_5$ and $|\mathcal{S}_{5'}(G_{\pi})|_5=|G_{\pi}|_5$ are equal to $5$.
\end{example}

\begin{remark}
\label{remark_open}
If $G$ is a finite group, and $\pi=\pi(G)$, then the $\pi$-class size frequency function undoubtedly determines the abelianity of the group. But this problem cannot be elementary transferred to the case $\pi\subsetneq \pi(G)$. Let us suppose that $G$ and $H$ are two finite groups such that, for certain subset $\pi$, their multisets of class sizes of $\pi$-elements coincide, and $G$ has abelian Hall $\pi$-subgroups. Then certainly the class sizes of all $\pi$-elements of $G$, and so also those of $H$, are $\pi'$-numbers. By \cite[Theorem B]{BFM...}, which uses the classification of finite simple groups, it follows that $H$ has nilpotent Hall $\pi$-subgroups. Further, $H$ has abelian Hall $\pi$-subgroups whenever $\pi\cap \{3,5\}=\emptyset$ by \cite[Theorem C]{BFM...}. Finally, if $\pi\cap \{3,5\}\neq\emptyset$ and $H$ has no composition factor isomorphic to $Ru, J_4, Th$ or $^{2}F_4(q_i)'$ with $q_i+1$ not divisible by $9$, then by \cite[Theorem B]{N2} we deduce that the Hall $\pi$-subgroups of $H$ are also abelian. Thus the open question is whether it is possible that $H$ has a composition factor isomorphic to one of those types of simple groups, under the assumptions that $G$ has abelian Hall $\pi$-subgroups and both groups have the same multiset of class sizes of $\pi$-elements.
\end{remark}


\noindent \textbf{Acknowledgements.} The author would like to thank Antonio Beltrán for helpful discussions about the paper.


\end{document}